\newtheorem{theorem}{Theorem}
\newtheorem{proposition}[theorem]{Proposition}
\newtheorem{question}[theorem]{Question}
\newtheorem{corollary}[theorem]{Corollary}
\newtheorem{remark}[theorem]{Remark}
\newcommand{\N}{\mathbb{N}}
\DeclareMathOperator{\dist}{\mathsf{dist}}
\begin{document}

\title[]{A note on the relation between Hartnell's firefighter problem and growth of groups}
 \author[E.~Mart\'inez-Pedroza]{Eduardo Mart\'inez-Pedroza}
 \address{Memorial University\\ St. John's, Newfoundland, Canada A1C 5S7}
 \email{emartinezped@mun.ca}
\subjclass[2000]{20F65,  43A07, 57M07, 05C57, 05C10, 05C25}
\keywords{}

 \begin{abstract}
The firefighter game problem on locally finite connected graphs was introduced by Bert Hartnell~\cite{FHS00}. The game on a graph $G$ can be described as follows:  let $f_n$ be a sequence of positive integers; an initial fire starts at a finite set of vertices; at each (integer) time $n\geq 1$, $f_n$ vertices which are not on fire become protected, and then the fire spreads to all unprotected neighbors of vertices on fire; once a vertex is protected or is on fire, it remains so for all time intervals. The graph $G$ has the \emph{$f_n$-containment property} if every initial fire admits an strategy that protects $f_n$ vertices at time $n$ so that the set of vertices on fire is eventually constant.  If the graph $G$ has the containment property for a sequence of the form $f_n=Cn^d$, then the graph is said to have  \emph{polynomial containment}.  In~\cite{DMPT15}, it is shown that any locally finite graph with polynomial growth has  polynomial containment; and it is remarked that the converse does not hold. That article also raised the question of whether the equivalence of polynomial growth and polynomial containment holds for Cayley graphs of finitely generated groups. In this short note, we remark how the equivalence holds for elementary amenable groups and for non-amenable groups from results in the literature.
  \end{abstract}

\maketitle

Let $G$ be  a connected and locally finite graph, and let $d$ be a non-negative integer. The graph $G$ satisfies a \emph{polynomial containment of degree at most $d$} if there exists a constant $C>0$ such that any finite subset $X_0$ of vertices of $G$ admits a $\{Cn^d\}$-containment strategy, i.e.,  there is a sequence   $\{W_k\colon k\geq 1\}$ of subsets of vertices of $G$  such that
\begin{enumerate}
\item for every $n\geq 1$, the set $W_n$ has cardinality at most $Cn^d$,
\item the sets $X_{n}$ and $W_{n+1}$ are disjoint for $n\geq 0$, where  $X_n$ for $n>0$ is defined as the set of vertices at distance less than or equal to one of $X_{n-1}$  and do not belong to $W_1\cup \cdots \cup W_n$.
\item \label{item:time} there is an integer $T>0$ such that $X_n=X_T$ for every $n\geq T$.
\end{enumerate}

This definition can be interpreted as the existence of winning strategies for a single player game: a fire starts at an arbitrary finite set of vertices $X_0$, at each time interval the fire spreads to all unprotected vertices that neighbor vertices on fire, and  $X_n$ is the set of vertices on fire at time $n$; the player aims to contain the fire by protecting at time $n$ a set $W_n$ of at most  $Cn^d$ vertices that are not on fire;  once a vertex is protected or is  on fire, it remains so for all time intervals.
The graph $G$  satisfies \emph{polynomial containment of degree at most $d$} if there is $C>0$ such that for any initial fire $X_0$ the player has an $\{Cn^d\}$-strategy to contain the fire.

The notion of edge-path in a graph $G$ defines a metric on the set of vertices  by declaring $\dist_G(x, y)$ to be the length of the shortest path from $x$ to $y$. The metric $\dist$ on the set of vertices of $G$ is called \emph{the combinatorial metric on $G$.}   The notion of quasi-isometry is an equivalence relation between metric spaces which  plays a significant role in the study of discrete groups, for a definition and overview see~\cite{BrHa99}. 

\begin{theorem}\label{thm:qi-containment}\cite[Theorem 8]{DMPT15}
In the class of connected graphs with bounded degree, the property of having polynomial containment of degree at most $d$ is preserved by quasi-isometry.
\end{theorem}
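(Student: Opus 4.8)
The plan is to transport a containment strategy across the quasi-isometry while rescaling time linearly, so that the polynomial degree is preserved exactly. Fix a $(\lambda,\epsilon)$-quasi-isometry $f\colon G\to H$ with quasi-inverse $g\colon H\to G$ (also $(\lambda,\epsilon)$, with $\dist_H(fg(y),y)\le\epsilon$ and $\dist_G(gf(x),x)\le\epsilon$), and let $\Delta$ bound the degrees of $G$ and $H$. Given an initial fire $Y_0$ in $H$, I would set $X_0:=g(Y_0)$, a finite set in $G$, and invoke the hypothesis: $G$ admits a $\{Cn^d\}$-containment strategy $(W_n)_{n\ge1}$ for $X_0$, with fire $(X_n)$ and $X_n=X_T$ for $n\ge T$; I discard protections after time $T$, so that $\bar W:=\bigcup_n W_n$ is finite and $\partial X_T\subseteq\bar W$.

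First I would establish the geometric core, a \emph{barrier transport lemma}: with $r:=\lambda(\lambda+\epsilon)+2\epsilon$, the thickened images $V_n:=N_r[f(W_n)]$ have the property that $V:=\bigcup_n V_n$ separates $Y_0$ from infinity in $H$, and the component of $Y_0$ in $H\setminus V$ is finite (contained in a fixed neighbourhood of $f(X_T)$). The argument is a push--pull: any $H$-edge-path $p=(p_0,\dots,p_\ell)$ starting in $Y_0$ and avoiding $V$ maps under $g$ to a $(\lambda+\epsilon)$-coarse path in $G$, which after filling consecutive points by geodesics becomes a genuine $G$-path from $X_0$; since $\bar W$ separates $X_0$ from infinity, this path meets some $w\in\bar W$ at a point within $\lambda+\epsilon$ of one of the $g(p_i)$, and pushing forward by $f$ places $p_i$ within $r$ of $f(w)$, i.e. in $V$, a contradiction. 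Hence a fully protected $V$ confines the fire.

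The key to preserving the \emph{degree} (rather than getting $d+1$) is to deploy the transported protections on the \emph{$G$-clock}, not by distance. Put $s:=\lceil\lambda+\epsilon\rceil$ and define the $H$-strategy by protecting, at each time $m$, the set $V^{(H)}_m:=\bigcup_{j=(m-1)s+1}^{ms}N_r[f(W_j)]$; equivalently one simulates $s$ steps of the $G$-process per step of the $H$-process. Since $|N_r[f(W_j)]|\le|W_j|\,\Delta^{r+1}\le C\Delta^{r+1}j^d$ and only the $s$ indices $j\le ms$ contribute at time $m$, one gets $|V^{(H)}_m|\le sC\Delta^{r+1}(ms)^d=C'm^d$, a polynomial of the same degree $d$; the multiplicative distortion is absorbed into the constant precisely because a single $H$-step inherits the budget of only $O(1)$ many $G$-steps.

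What remains — and what I expect to be the main obstacle — is the \emph{timing}: one must show the $H$-fire never crosses $V$ and that each $V^{(H)}_m$ is disjoint from the current fire (one may not protect a burning vertex). I would prove both by maintaining the coupling invariant $g(Y_m)\subseteq N_{c_0}[X_{ms}]$ with $c_0:=\lambda+\epsilon$, where $X_{ms}$ is the simulated $G$-fire. The inductive step uses that $g$ moves each $H$-edge a distance $\le\lambda+\epsilon=c_0$, that the $G$-fire advances $s\ge c_0$ steps per $H$-step, and the crucial point that if the $G$-image of a newly burning $H$-vertex were blocked by some $w\in W_j$, then that vertex would lie in $N_r[f(w)]\subseteq V^{(H)}_m$ and hence could not be burning; so unblocked spread keeps the image inside $N_{c_0}[X_{ms}]$. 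The same invariant, together with validity of the $G$-strategy (a vertex protected at virtual time $j$ is not yet burning then), should yield $V^{(H)}_m\cap Y_{m-1}=\emptyset$, and since $X_{ms}\subseteq X_T$ for $ms\ge T$ the fire image, hence $Y_m$, stays in a fixed finite neighbourhood of $f(X_T)$ and is eventually constant. The delicate part is reconciling the constants $c_0,r,s$ so that the coupling is genuinely preserved and no protection ever lands on an already-burning vertex; this is where the bounded-degree hypothesis (finiteness of $N_r[\,\cdot\,]$) and the linear rescaling must be balanced. This establishes that $H$ has polynomial containment of degree at most $d$, as desired.
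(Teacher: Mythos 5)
Your statement is quoted in the paper from \cite[Theorem 8]{DMPT15} without proof, so there is no in-paper argument to compare against; I assess the proposal on its own terms. Your static ``barrier transport lemma'' is correct: \emph{if} all the thickened sets $N_r[f(W_j)]$ were validly protected, the push--pull argument does show they confine the $H$-fire to a finite neighbourhood of $f(X_T)$. The genuine gap is exactly at the point you flag as ``delicate,'' and it is not a matter of reconciling constants: the scheme of protecting $N_r[f(W_j)]$ at the rescaled time $\lceil j/s\rceil$ produces an \emph{illegal} strategy, because the given $G$-strategy is allowed to place $W_j$ adjacent to the $G$-fire front, and then the $r$-thickening of $f(W_j)$ contains vertices that are already burning in $H$, violating the disjointness condition (2) of the containment definition. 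This already fails for the identity quasi-isometry of $\Z$ (so $\lambda=1$, $\epsilon=0$, hence $s=1$, $r=1$, $c_0=1$): take $Y_0=X_0=\{0\}$ and the valid $G$-strategy $W_1=\{-1\}$, $W_2=W_3=W_4=\emptyset$, $W_5=\{5\}$, which gives $X_4=\{0,1,2,3,4\}$ and containment at $T=4$ with one protected vertex per turn. Your transported strategy must protect $N_1[W_1]=\{-2,-1,0\}$ at time $1$, but $0\in Y_0$ is burning; and even if one repairs time $1$ (say by protecting only $\{-2,-1\}$, or by enlarging the initial simulated fire), at time $5$ it must protect $N_1[W_5]=\{4,5,6\}$ while $4\in Y_4$ is already burning. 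So the transported protections land on the fire, at time $1$ and again at arbitrarily late times.

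This also exposes the circularity in your inductive argument for the coupling invariant: you argue that a newly burning $H$-vertex whose $G$-image is blocked by some $w\in W_j$ ``would lie in $N_r[f(w)]\subseteq V^{(H)}_m$ and hence could not be burning,'' but that conclusion is available only if $N_r[f(w)]$ was deployed \emph{before} that vertex burned and was \emph{validly} deployed --- which is precisely what fails (in the $\Z$ example, vertex $4$ burns at time $4$, one step before $N_1[f(W_5)]$ is placed). The obstruction is structural rather than numerical: some thickening $r\geq 1$ is forced (otherwise the image $f(\bar W)$ leaves gaps in $H$ and your own push--pull lemma shows the barrier leaks), yet any scheme that deploys each thickened protection at the same rescaled time as its $G$-counterpart will overlap the $H$-fire whenever the $G$-strategy protects close to its own fire front, which valid strategies may do at all times. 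What is missing is a mechanism that gives the transported protections a genuine \emph{time lead} over the $H$-fire --- equivalently, a way to convert an arbitrary containment strategy into one keeping a definite safety distance from the fire at polynomially bounded extra cost --- and arranging this without circularity is delicate, since the containment time $T$ depends on the initial fire, which would in turn have to be enlarged by an amount depending on $T$. That timing problem is the real content of the proof of \cite[Theorem 8]{DMPT15}, and your proposal does not close it.
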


For a  connected and locally finite graph $G$ with a chosen vertex $g_0$, let $\beta(n)$ denote the number of vertices at distance at most $n$ from $g_0$. Recall that the function $\beta\colon \N \to \N$ is called the growth function of $G$ with respect to $g_0$. A graph $G$ has \emph{polynomial growth of degree at most $d$} if there is a constant $C>0$ such that $\beta(n) \leq Cn^d$ for each integer $n$.  It is an exercise to show that having polynomial growth of degree $\leq d$ is independent of the base vertex $g_0$.  
Moreover, in the class of connected graphs with bounded degree, it is a quasi-isometry invariant~\cite{BrHa99}.  

The following relation between polynomial growth and polynomial containment can be used to provide examples of graphs with polynomial containment. 
\begin{theorem}\label{thm:growth-containment}\cite[Theorem 3]{DMPT15}
Let $G$ be a connected graph with polynomial growth of degree $d$. Then $G$ has polynomial containment of degree at most $\max\{ 0, d-2 \}$. 
\end{theorem}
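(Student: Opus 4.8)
The plan is to contain the fire by protecting a single sphere around a fixed basepoint, chosen to be \emph{thin} in a sense forced by polynomial growth. Fix the basepoint $g_0$ used to define the growth function, and write $B(R)=\{v:\dist(g_0,v)\le R\}$ and $S(R)=\{v:\dist(g_0,v)=R\}$, so that $|B(R)|\le CR^d$. Given an arbitrary finite initial fire $X_0$, let $R_0=\max_{u\in X_0}\dist(g_0,u)$, so that $X_0\subseteq B(R_0)$. The key geometric observation is that $S(R)$ separates $B(R-1)$ from the rest of $G$: every edge changes the distance to $g_0$ by at most one, so once every vertex of some sphere $S(R)$ with $R-1\ge R_0$ has been protected, the fire can never leave the finite set $B(R-1)$, whence $X_n$ is eventually constant and condition (3) holds. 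It remains to exhibit a radius $R$ whose sphere is small enough and far enough from the fire to be protected within the allotted budget.

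First I would locate a thin sphere by averaging. Since $\sum_k|S(k)|$ telescopes to a ball cardinality, the sum $\sum_{3R_0/2 < k \le 2R_0}|S(k)|$ is at most $|B(2R_0)|\le C(2R_0)^d$ and is spread over roughly $R_0/2$ integers $k$; hence there is an integer $R\in(3R_0/2,2R_0]$ with
\[
|S(R)|\ \le\ \frac{C(2R_0)^d}{R_0/2}\ =\ C_2\,R_0^{\,d-1},
\]
where $C_2=2^{d+1}C$ depends only on $C$ and $d$. For this $R$ one has $R-1\ge R_0$ (assuming $R_0$ large), so the separation step applies; moreover every $v\in S(R)$ satisfies $\dist(X_0,v)\ge \dist(g_0,v)-R_0=R-R_0\ge R_0/2=:T$, so the fire reaches no vertex of $S(R)$ before time $T$.

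Finally I would verify that the sub-polynomial per-step budget accumulates enough capacity over the available time $T$ to cover $S(R)$. Writing $e=\max\{0,d-2\}$, the number of vertices the player may protect during times $1,\dots,T$ is
\[
\sum_{n=1}^{T} C' n^{e}\ \ge\ c\,C'\,T^{\,e+1}\ \ge\ c\,C'\,(R_0/2)^{\,\max\{1,\,d-1\}}
\]
for a constant $c>0$ depending only on $d$, using $e+1=\max\{1,d-1\}\ge d-1$. This lower bound exceeds $C_2R_0^{d-1}=|S(R)|$ once $C'$ is chosen large enough \emph{in terms of $C$ and $d$ alone}, uniformly over all initial fires. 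The player therefore distributes the protection of the at most $C_2R_0^{d-1}$ vertices of $S(R)$ across times $1,\dots,T$, setting $W_n=\varnothing$ afterwards; each such vertex is protected by time $T\le\dist(X_0,v)$, hence is not yet on fire when protected, confirming conditions (1) and (2). The main obstacle is precisely this last matching: one must secure the \emph{uniformity} of $C'$ over all fires, which is why the sphere is taken around the fixed basepoint $g_0$ (so the growth bound applies directly) in a radius window comparable to $R_0$ (so the available time $T$ scales linearly with $R_0$ and the budget sum $T^{e+1}\sim R_0^{d-1}$ exactly absorbs the thin-sphere bound). Degenerate small fires, for which the window $(3R_0/2,2R_0]$ contains no integer, are handled separately by protecting a minimal sphere of radius in a fixed finite range, again at the cost of enlarging $C'$; the case $d=0$ forces $G$ finite and is trivial.
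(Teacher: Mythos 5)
Your proposal is correct and takes essentially the same route as the paper's source proof (Theorem 3 of \cite{DMPT15}): both arguments use the pigeonhole principle on the growth bound to find a sphere of size $O(R_0^{d-1})$ about the fixed basepoint at distance comparable to the radius $R_0$ of the initial fire, and then protect that sphere using the budget $\sum_{n=1}^{T}C'n^{\max\{0,d-2\}}\geq c\,C'\,T^{\max\{1,d-1\}}$ accumulated during the time $T$ comparable to $R_0$ that the fire needs to reach it. The bookkeeping points you flag (integer rounding in the annulus, degenerate small fires, the cases $d\leq 1$) are indeed only constant adjustments and are handled the same way there.
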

\begin{remark}\label{rem:converse}
The converse of Theorem~\ref{thm:growth-containment} does not hold, there are connected graphs of bounded degree with subexponential growth and constant containment, see~\cite[Example 2.7]{DMPT15}. The examples in the cited paper are graphs of bounded degree with infinitely many vertices that separate the graph into a finite subgraph and an infinite subgraph. 
\end{remark}

In contrast, examples of locally finite infinite graphs without polynomial containment can be exhibited using the following proposition. By a $\delta$-regular tree, we mean an infinite tree such that every vertex has degree exactly $\delta$.
\begin{proposition} \label{prop:tree-containment}\cite[Corollary 11]{DMPT15}
If a graph $G$  contains a subgraph quasi-isometric to the infinite $\delta$-regular tree with $\delta\geq 3$, then $G$ does not satisfy a polynomial containment property. 
\end{proposition}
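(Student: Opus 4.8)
The plan is to argue by contradiction, reducing the statement to the single combinatorial fact that the $\delta$-regular tree $T_\delta$ with $\delta\geq 3$ itself admits no polynomial containment of any degree, and then transporting this failure to $G$ through two reductions.

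First I would record a monotonicity principle: if a graph $G$ satisfies polynomial containment of degree at most $d$ with constant $C$, then so does every subgraph $H\subseteq G$, with the same $d$ and $C$. Given an initial fire $X_0\subseteq V(H)$, one runs the winning $G$-strategy $\{W_n\}$ and, inside $H$, protects the traces $W_n\cap V(H)$. A straightforward induction shows the fire $X_n^H$ produced in $H$ is contained in the fire $X_n^G$ produced in $G$: a vertex burning in $H$ has an $H$-neighbor, hence a $G$-neighbor, already on fire in $G$, and is unprotected in $G$, so it burns in $G$. Since $X_n^G$ is eventually constant and $X_n^H$ is nondecreasing inside the finite set $X_T^G$, the fire $X_n^H$ is eventually constant as well; the budget bound $|W_n\cap V(H)|\leq Cn^d$ and the disjointness of $W_n\cap V(H)$ from $X_{n-1}^H$ are both inherited, so this is a legal $H$-strategy.

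Second, I combine this with the hypotheses, working in the bounded-degree setting of Theorem~\ref{thm:qi-containment}, where any subgraph $H$ of a bounded-degree graph $G$ again has bounded degree. Suppose $G$ had polynomial containment of degree at most $d$. By the monotonicity principle the subgraph $H$ quasi-isometric to $T_\delta$ would also have it, and then Theorem~\ref{thm:qi-containment} would force $T_\delta$ to have polynomial containment of degree at most $d$. Thus it suffices to prove that $T_\delta$ has no polynomial containment for any $d$.

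Third, and this is the heart of the matter, I would prove that $T_\delta$ fails polynomial containment for every $C$ and $d$ by tracking the size of the burning frontier. The naive estimate, comparing the cumulative budget $\sum_{k\leq T}Ck^d=O(T^{d+1})$ against the protected cut-set, does not close, because the containment time $T$ may be exponentially large in the size of the initial fire; this is the main obstacle. Instead, let $X_0$ be the ball of radius $R$ about a vertex and let $a_t$ denote the number of unprotected vertices adjacent to the fire at time $t$, so $a_0$ equals the number of vertices at distance exactly $R+1$, namely $\delta(\delta-1)^{R}$. Each burning frontier vertex has $\delta-1\geq 2$ children and at most $C(t+1)^d$ of the frontier is protected per step, which yields the recursion $a_{t+1}\geq (\delta-1)\bigl(a_t - C(t+1)^d\bigr)$. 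Unwinding it gives $a_t\geq (\delta-1)^t\bigl(a_0-(\delta-1)M\bigr)$ with $M=C\sum_{k\geq 1}(\delta-1)^{-k}k^d<\infty$; choosing $R$ large enough that $a_0$ exceeds a fixed multiple of $M$ (possible since $a_0$ grows exponentially in $R$ while $M$ is fixed), the exponential term overwhelms the polynomial budget, so $a_t>C(t+1)^d$ for all $t$ and the fire is never contained. Since $C$ and $d$ were arbitrary, $T_\delta$ has no polynomial containment, which completes the argument.
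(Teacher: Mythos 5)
Your overall architecture is sound and is essentially the route of the cited source~\cite{DMPT15}: a monotonicity lemma (containment passes to subgraphs via the trace strategy $W_n\cap V(H)$), quasi-isometry invariance (Theorem~\ref{thm:qi-containment}) to reduce to the regular tree itself, and a direct frontier-counting argument on $T_\delta$. The monotonicity lemma and its proof are correct: the induction $X_n^H\subseteq X_n^G$, the inherited disjointness, and the stabilization argument all go through as you state. One caveat: the proposition assumes neither that $G$ nor that the subgraph $H$ has bounded degree, and a subgraph quasi-isometric to $T_3$ can have unbounded degree (attach to each vertex of $T_3$ a growing number of leaves), so your appeal to Theorem~\ref{thm:qi-containment} only covers the bounded-degree case. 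This suffices for every application in this note, where $G$ is a Cayley graph, and it can be repaired in general: inside $H$ one can extract a connected bounded-degree subgraph still quasi-isometric to $T_\delta$ (take the union of $H$-geodesics joining images of adjacent tree vertices), and monotonicity itself never needs a degree bound.

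The genuine gap is the recursion $a_{t+1}\geq (\delta-1)\bigl(a_t-C(t+1)^d\bigr)$: as stated it is false, because it charges against the step-$(t+1)$ budget only the protection placed on the current frontier, whereas the player may pre-protect vertices deeper in the tree at earlier, cheaper steps, and those vertices silently drop out of $a_{t+1}$ when the fire reaches their parents. Concretely, take $\delta=3$, $d=0$, $C=1$: at step $1$ protect a vertex $w$ at distance $R+3$ from the root, at step $2$ protect one frontier vertex; then $a_2=2(a_1-1)-1<2(a_1-1)$, violating your inequality. The leak is small and the proof is repairable: the number of pre-protected children of newly burnt vertices at step $t+1$ is at most the cumulative budget $\sum_{s\leq t+1}Cs^d\leq C(t+1)^{d+1}$, so $a_{t+1}\geq(\delta-1)\bigl(a_t-2C(t+1)^{d+1}\bigr)$, and your unwinding runs verbatim with $d+1$ in place of $d$, since $\sum_{k\geq 1}(\delta-1)^{-k}k^{d+1}$ still converges. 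Alternatively, one can avoid the recursion entirely with static bookkeeping: in a tree, a vertex of the sphere of radius $R+t$ burns at time $t$ if and only if no vertex of its ancestor path outside the initial ball is ever protected; a protected vertex not already in the shadow of an earlier one must lie at distance at least $R+s$ from the root when placed at step $s$, hence shadows at most $(\delta-1)^{t-s}$ vertices of that sphere, and summing shadows yields the same lower bound $(\delta-1)^t\bigl(a_0-(\delta-1)M\bigr)$ directly, with every protected vertex accounted for no matter where or when it is placed.
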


For any finitely generated group $G$,  any two Cayley graphs with respect to finite generating sets  are quasi-isometric. In view of Theorem~\ref{thm:qi-containment},  we say that  $G$ has \emph{polynomial containment of degree $d$} if the Cayley graph of $G$ with respect to a finite generating set has polynomial containment of degree $d$; and we say that $G$ has \emph{polynomial containment} if it has polynomial containment of degree $d$ for some $d$.  The question of whether the converse of Theorem~\ref{thm:growth-containment} holds for finitely generated groups was raised in~\cite{DMPT15}. 

\begin{question} \cite[Question 12]{DMPT15} \label{question}  In the class of finitely generated groups, 
  is having polynomial growth of degree $d$ equivalent to having polynomial containment of degree $\max\{0, d-2\}$?
\end{question}

This note discusses the following weaker version of the above question.
\begin{question}  \label{question2}  In the class of finitely generated groups,
 is having polynomial growth equivalent to have polynomial containment?
\end{question}
In this note we observe that the answer is positive for elementary amenable groups, and for non-amenable groups. Indeed, in the class of finitely generated groups, polynomial containment implies amenability; and in the class of finitely generated elementary amenable groups, polynomial containment implies polynomial growth. Below we explain this statements.

Recall that the Cheeger constant  $h(G)$ of a locally finite graph $G$ is defined as
\[ h(G) = \inf_K \frac{|\partial K|}{|K|}\]
where $K$ is any non-empty finite subset of vertices  of $G$, and $\partial K$ is the set of edges of $G$ with one endpoint in $K$ and the other endpoint not in $K$.
A locally finite graph $G$ is \emph{amenable} if $h(G)=0$, and otherwise is called \emph{non-amenable}. In the class of graphs with bounded degree, being amenable is preserved by quasi-isometry; hence for a finitely generated group either  Cayley graphs with respect to  finite generating sets are amenable, or all are non-amenable. This yields the notions of \emph{amenable} and \emph{non-amenable} in the class of finitely generated groups.

\begin{theorem}\label{thm:CheegerTree} \cite[Theorem 1.1]{BS97}
Any non-amenable locally finite graph contains a tree with positive Cheeger constant.
\end{theorem}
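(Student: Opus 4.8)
The plan is to convert the hypothesis $h(G)>0$ into a purely combinatorial \emph{branching} condition and then use that condition to carve out an embedded subtree whose abstract tree structure has branching at least two, which forces a positive Cheeger constant. I would first pass to the vertex-isoperimetric formulation. For a graph of bounded degree $\Delta$ the edge boundary and the outer vertex boundary $\partial_V S$ are comparable, $|\partial_V S|\le |\partial S|\le \Delta\,|\partial_V S|$, so $h(G)>0$ is equivalent to a vertex-isoperimetric inequality $|N_1(S)|\ge (1+c)|S|$ for all finite $S$ and some $c>0$, where $N_r(S)$ denotes the set of vertices at distance at most $r$ from $S$. Applying this inequality to the successively enlarged finite sets $N_1(S),N_2(S),\dots$ yields $|N_R(S)|\ge (1+c)^R|S|$, so after fixing $R$ with $(1+c)^R\ge 2$ one obtains the single clean estimate
\[ |N_R(S)|\ge 2\,|S| \qquad\text{for every finite set of vertices } S. \]

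The second step is to manufacture from this estimate a bounded-displacement two-to-one map. I would set up the bipartite marriage graph whose demand side consists of two disjoint copies of the vertex set $V$ and whose supply side is $V$, joining a copy of $w$ to a vertex $v$ exactly when $\dist_G(v,w)\le R$, and I would seek a matching saturating the demand side. The expansion estimate above is precisely Hall's condition here: a finite family of demands supported on a set $S$ consists of at most $2|S|$ demands, while their common supply neighbourhood is $N_R(S)$, which contains at least $2|S|$ vertices. Since $G$ is locally finite the marriage graph is locally finite on the supply side, so a finite-exhaustion plus compactness (König's lemma) argument upgrades the finite marriage theorem to the infinite setting. Setting $\phi(v)=w$ whenever $v$ is matched to a copy of $w$ produces a map $\phi\colon V\to V$ with $\dist_G(v,\phi(v))\le R$ and $|\phi^{-1}(w)|\ge 2$ for every $w$.

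The third step is to read off the tree. Fixing a root $v_0$ and iterating preimages of $\phi$ produces an abstract rooted tree in which the children of $w$ are the (at least two) elements of $\phi^{-1}(w)$; realizing each parent–child relation by a geodesic of length at most $R$ in $G$ gives a connected subgraph of $G$ quasi-isometric to the rooted binary tree. Because the binary tree is non-amenable and, by the quasi-isometry invariance of amenability in the bounded-degree setting recalled above, non-amenability passes to any bounded-degree graph quasi-isometric to it, the resulting subtree $T$ satisfies $h(T)>0$, as required.

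The main obstacle is exactly the passage from a quasi-isometric \emph{image} of the binary tree to an honest embedded \emph{subtree}: the geodesics realizing distinct parent–child pairs may intersect, the iterated preimage sets $\phi^{-k}(v_0)$ need not be disjoint, and accidental adjacencies could create cycles or merge branches. I would resolve this by thinning: at each vertex retain only two children, and use the freedom to descend deeper, together with a separation argument (or replace $\phi$ by a power $\phi^m$, which is $2^m$-to-one), to select a sub-binary-tree whose chosen vertices are pairwise at distance greater than $2R$, so that the connecting geodesics meet only at shared endpoints and their union is genuinely acyclic. A secondary point is the reduction from a general locally finite graph to the bounded-degree case used above; here high-degree vertices only increase the available branching, and I expect them to be absorbed into the same matching argument rather than to cause real difficulty.
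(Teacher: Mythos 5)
This statement is not proved in the paper at all: it is imported verbatim from Benjamini--Schramm \cite[Theorem 1.1]{BS97}, so your attempt has to be judged on its own merits and against that original argument. Your first two steps are fine in the bounded-degree setting: positive edge-Cheeger constant gives vertex expansion $|N_1(S)|\ge (1+c)|S|$, iteration gives $|N_R(S)|\ge 2|S|$, and the infinite Hall/K\"onig argument does produce a bounded-displacement map $\phi$ with every fiber of size at least $2$ (this is the standard ``wobbling map'' characterization of non-amenability). The abstract iterated-preimage tree is also fine, since fibers of a function are disjoint and paths to the root merge permanently once they meet.

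The genuine gap is exactly where you place it, but your proposed repair fails. Separating the selected branch vertices by distance greater than $2R$ (or $2mR$ after replacing $\phi$ by $\phi^m$) cannot make the connecting geodesics internally disjoint, because edges of the intended tree that are adjacent or nearly adjacent \emph{in the tree} have endpoints that are forced to be close \emph{in the graph}: a parent $a$ and its children $b_1,b_2$ all lie within distance $mR$ of one another, so the geodesics $a\to b_1$, $a\to b_2$, and $b_1\to(\text{its children})$ run inside overlapping balls and may cross no matter how widely the deeper levels are thinned; no separation condition on the chosen vertices can be imposed on these pairs, since their mutual distance is bounded by construction. Once crossings occur, the natural fixes are dangerous: pruning or rerouting a merged branch can leave arbitrarily long pendant paths, and a tree with unbounded pendant paths has Cheeger constant $0$ (take $K$ to be such a path), while rerouting can also destroy the ``at least two children per vertex'' property. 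Acyclicity, uniform branching, and bounded dead ends must be secured \emph{simultaneously}; that is the actual content of the theorem, and in \cite{BS97} it is obtained by a flow-type (max-flow--min-cut/Menger) argument yielding systems of disjoint paths, not by independent geodesic choices plus separation. A secondary issue: the statement concerns locally finite graphs, whereas your step 1 (comparability of edge and vertex boundary) genuinely requires bounded degree, and your remark that high-degree vertices ``only increase the branching'' is not an argument --- although for the application in this note, to Cayley graphs, bounded degree is all that is needed.
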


\begin{corollary}\label{cor:amenable}
Finitely generated groups with polynomial containment are amenable. 
\end{corollary}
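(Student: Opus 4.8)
The plan is to prove the contrapositive: if a finitely generated group $G$ is non-amenable, then it fails to have polynomial containment. Fix a finite generating set and let $\Gamma$ denote the associated Cayley graph, a connected locally finite graph of bounded degree with $h(\Gamma)>0$. By definition of non-amenability for groups, $\Gamma$ is non-amenable, so Theorem~\ref{thm:CheegerTree} supplies a subgraph $T\subseteq\Gamma$ that is a tree with $h(T)>0$; being a subgraph of $\Gamma$ it has bounded degree, and being non-amenable it is infinite. The goal is to locate inside $T$ a subgraph quasi-isometric to the infinite $\delta$-regular tree for some $\delta\ge 3$, so that Proposition~\ref{prop:tree-containment} applies to $\Gamma$.

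The heart of the argument is the claim that a bounded-degree tree $T$ with $h(T)=h>0$ contains a subtree quasi-isometric to the rooted binary tree, and hence to the $3$-regular tree. First I would trim the finite branches: if $F$ is a finite connected component obtained by deleting a single vertex of $T$, then $\partial F$ is one edge, so $h\le 1/|F|$ forces $|F|\le 1/h$. Consequently every vertex of $T$ lies within distance $1/h$ of the \emph{infinite core} $T_\infty$, the (convex) subtree of vertices whose descendant subtree is infinite; thus the inclusion $T_\infty\hookrightarrow T$ is a quasi-isometry, and since amenability is a quasi-isometry invariant among bounded-degree graphs, $h(T_\infty)=h'>0$. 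Working in $T_\infty$, every vertex has at least one child, and a maximal run $v_1,\dots,v_m$ of vertices each having exactly one child satisfies $|\partial\{v_1,\dots,v_m\}|=2$, whence $m\le 2/h'$. Therefore along every ray the \emph{branch vertices} (those with at least two children in $T_\infty$) occur within bounded gaps $R_0=2+\lceil 2/h'\rceil$. This coarse density of branching drives a greedy construction: send the root of the binary tree to a branch vertex near the basepoint, and inductively send the two children of a node mapped to a branch vertex $b$ to the first branch vertices met along two infinite descendant rays issuing from two distinct children of $b$. Distinct branches land in disjoint subtrees of $T_\infty$, each binary-tree edge is realized by a path of length between $1$ and $R_0+1$, and the resulting map is a quasi-isometric embedding onto a subtree $T'\subseteq T$.

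With the claim established the corollary follows at once: $T'$, and therefore $\Gamma$, contains a subgraph quasi-isometric to the $3$-regular tree, so Proposition~\ref{prop:tree-containment} shows that $\Gamma$ satisfies no polynomial containment property, and hence neither does $G$. The main obstacle is the claim itself, and within it the verification that the greedy procedure yields an honest quasi-isometric embedding rather than a mere Lipschitz map: the estimate $m\le 2/h'$ on non-branching segments is the quantitative input making branching coarsely dense, while the disjointness of distinct subtrees in a tree is what forces the concatenated edge-paths to be geodesic and thereby supplies the lower distance bound. Everything else is an assembly of Theorem~\ref{thm:CheegerTree}, the quasi-isometry invariance of amenability, and Proposition~\ref{prop:tree-containment}.
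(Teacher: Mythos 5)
Your proposal is correct and follows essentially the same route as the paper: invoke Theorem~\ref{thm:CheegerTree} to find a tree $T$ with positive Cheeger constant in the Cayley graph, argue that positive Cheeger constant plus bounded degree forces branching to be coarsely dense so that $T$ contains a subgraph quasi-isometric to the $3$-regular tree, and conclude via Proposition~\ref{prop:tree-containment}. The only difference is one of detail: the paper compresses the middle step into the observation that embedded paths with degree-$2$ interior vertices have bounded length, whereas you carry out the pruning of finite branches, the bound on unary runs, and the greedy embedding of the binary tree explicitly.
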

\begin{proof}
Let $\Gamma$ be a non-amenable finitely generated group, and $G$ be its Cayley graph with respect to a finite generating set. 
By Theorem~\ref{thm:CheegerTree}, the graph $G$  contains a subgraph $T$ which is a tree with positive Cheeger constant. 
Observe that $T$ is necessarily infinite; since $T$ has bounded degree and $h(T)>0$, it follows that there is an upper bound on the length
of embedded paths in $T$ whose interior vertices have degree $2$ in $G$. It follows that $T$, and hence $G$, contains a subgraph quasi-isometric to the $3$-regular tree. 
Therefore Proposition~\ref{prop:tree-containment} implies that $G$  does not have polynomial containment. 
\end{proof}

The class of \emph{elementary amenable} groups is the smallest class of groups containing all finite groups and all abelian groups, and closed under taking subgroups, quotients, extensions, and direct unions. 

\begin{theorem}\label{thm:semigroup}\cite[Theorem 3.2']{Ch80}
A finitely generated elementary-amenable group  is either virtually nilpotent or contains a free semigroup in two generators.
\end{theorem}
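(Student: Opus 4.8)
The plan is to prove this structural dichotomy by transfinite induction along the hierarchy that defines elementary amenability, using as the analytic engine the Milnor--Wolf theorem together with Rosenblatt's refinement: a finitely generated \emph{solvable} group is either virtually nilpotent or contains a free subsemigroup on two generators. Recall that one may stratify elementary amenable groups by ordinal rank: let $EG_0$ consist of the finite and the abelian groups, let $EG_{\alpha+1}$ consist of those groups that are extensions of a group in $EG_\alpha$ by a group in $EG_\alpha$ or are directed unions of groups in $EG_\alpha$, and let $EG_\lambda=\bigcup_{\alpha<\lambda}EG_\alpha$ at limit ordinals; every elementary amenable group lies in some $EG_\alpha$. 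I would prove by induction on the least such $\alpha$ the claim that \emph{every finitely generated group in $EG_\alpha$ is either virtually nilpotent or contains a free subsemigroup on two generators}, which is precisely the assertion of the theorem.

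Most of the hierarchy is dispatched by soft arguments. When $\alpha=0$ the group is finite or abelian, hence virtually nilpotent. At a limit ordinal the group already belongs to some earlier $EG_\beta$, so the inductive hypothesis applies verbatim. If $G$ is a directed union of groups in $EG_\beta$, then finite generation forces the finite generating set into a single member of the union, so $G$ itself lies in $EG_\beta$ and we are again done by induction. The substantive case is an extension $1\to N\to G\to Q\to 1$ with $N,Q\in EG_\beta$ and $G$ finitely generated and not virtually nilpotent. Here I would first reduce to $Q$ being virtually nilpotent: the quotient $Q$ is finitely generated, so by induction it is virtually nilpotent or contains a free subsemigroup, and in the latter case lifting two generators to $G$ produces a free subsemigroup of $G$, since any nontrivial coincidence of distinct positive words in the lifts would project to such a coincidence in $Q$. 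Thus we may assume $Q$ is virtually nilpotent.

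The remaining case, an extension of a group $N\in EG_\beta$ by a virtually nilpotent group $Q$ with $G$ finitely generated but not virtually nilpotent, is where the real difficulty lies, and the main obstacle is that $N$ need not be finitely generated as a group, so the inductive hypothesis cannot be fed back into $N$. This is a genuine phenomenon rather than a bookkeeping nuisance: in the lamplighter group the kernel $N$ is an infinite locally finite group, $Q\cong\Z$, and the free subsemigroup is produced only through the conjugation action of $G$ on $N$, not inside $N$ or $Q$ separately. The feature I would exploit is that, although $N$ can be infinitely generated as a group, the finite generation of $G$ together with that of $Q$ makes $N$ finitely generated \emph{relative} to the conjugation action of $G$ -- for instance the lamplighter kernel $\bigoplus_{\Z}C_2$ is cyclic as a module over the group ring of $Q$. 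Using this relative finite generation, I would analyze suitable abelian sections of $N$ as finitely generated modules over the group ring of a polycyclic finite-index subgroup of $Q$, and then run the Milnor--Wolf/Rosenblatt growth analysis to show that the failure of virtual nilpotence of $G$ forces exponential behaviour realised by a free subsemigroup on two generators; controlling these sections for a general elementary amenable kernel, rather than an abelian one, is the technical heart of the argument and the step I expect to be genuinely delicate. Assembling the base case, the limit and directed-union cases, the lifting reduction, and this module-theoretic analysis completes the transfinite induction and establishes the dichotomy.
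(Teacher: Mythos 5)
First, a point of reference: the note you were given does not prove this statement at all --- it is quoted from Chou \cite[Theorem 3.2']{Ch80} --- so the comparison is with Chou's argument rather than with anything in the note itself. Your skeleton (transfinite induction along the extension/directed-union hierarchy, trivial base and limit cases, finite generation collapsing the directed-union case, lifting a free subsemigroup from the quotient) is exactly the right one, and it is the skeleton of Chou's proof; a minor caveat is that the claim that this restricted hierarchy, with only extensions and directed unions, exhausts the elementary amenable groups and that each stratum is closed under subgroups and quotients is itself a lemma requiring proof. The genuine gap, however, is the case you yourself flag as the technical heart: $G$ finitely generated, not virtually nilpotent, fitting in $1\to N\to G\to Q\to 1$ with $Q$ virtually nilpotent and $N\in EG_\beta$ possibly infinitely generated. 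This case carries all the content of the theorem, and your plan for it does not work as stated: treating ``suitable abelian sections of $N$'' as finitely generated modules over the group ring of a polycyclic subgroup of $Q$ is Milnor's device, adequate when $N$ is abelian (as in the lamplighter example you cite) or, with effort, solvable; but a general $N\in EG_\beta$ need not have any abelian sections that detect its structure --- $N$ can, for instance, be an arbitrary locally finite group --- so there is nothing for the module theory to act on. As written, the proposal establishes the solvable case (which is Rosenblatt's theorem, assumed as input) plus formal reductions, and leaves the inductive step open.

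The missing idea is to use the no-free-subsemigroup hypothesis on $G$ itself to regain finite generation of the kernel, rather than to analyze $N$ from the outside. The key lemma (due to Rosenblatt, and the engine of Chou's induction) is: if a group contains no free subsemigroup on two generators, then for any elements $x,y$ the subgroup generated by the conjugates $\{y^{-n}xy^{n}\colon n\in\Z\}$ is finitely generated. Granting this, one passes to a finite-index subgroup so that $Q$ is polycyclic and inducts on its Hirsch length: for a cyclic quotient $G/N=\langle tN\rangle$, the kernel $N$ is generated by the $\langle t\rangle$-conjugates of finitely many elements of $G$, hence is finitely generated by the lemma; then $N$ is a finitely generated group in $EG_\beta$ containing no free subsemigroup, so the inductive hypothesis makes $N$ virtually nilpotent, whence $G$ is virtually solvable, and Rosenblatt's theorem (which you already invoke) finishes. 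Without this lemma --- or some substitute converting semigroup-freeness into finite generation of kernels --- your induction has no way to enter $N$, and the argument stalls exactly where you predicted it would.
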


\begin{corollary}\label{cor:elementary}
Finitely generated elementary amenable groups with polynomial containment are virtually nilpotent and hence they have polynomial growth.
\end{corollary}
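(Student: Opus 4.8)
The plan is to combine the three tools assembled in the excerpt: Corollary~\ref{cor:amenable}, Theorem~\ref{thm:semigroup}, and Theorem~\ref{thm:growth-containment}. The goal is to show that a finitely generated elementary amenable group $\Gamma$ with polynomial containment must be virtually nilpotent. The key dichotomy is supplied by Theorem~\ref{thm:semigroup}: such a $\Gamma$ is either virtually nilpotent or contains a free semigroup on two generators. So the entire argument reduces to ruling out the second alternative, and for that I would argue that a free semigroup on two generators forces exponential growth inside $\Gamma$, which in turn produces a subgraph quasi-isometric to the $3$-regular tree, contradicting polynomial containment via Proposition~\ref{prop:tree-containment}.

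More concretely, I would first observe that if $\Gamma = \langle a, b \rangle$ contains a free subsemigroup on the two generators $a$ and $b$, then the set of all positive words of length $n$ in $a$ and $b$ gives $2^n$ distinct group elements, all lying in the ball of radius $n$ in the Cayley graph. Hence $\Gamma$ has exponential growth, and in particular does not have polynomial growth of any degree. The cleanest route, however, is not to argue through growth directly but to extract a tree. The semigroup elements, viewed as reduced positive words, naturally index the vertices of a rooted binary tree embedded in the Cayley graph: the word $w$ is joined to $wa$ and $wb$. Since $a$ and $b$ are fixed generators, consecutive words differ by a single generator and hence are at distance one in the Cayley graph, so this binary tree embeds as a subgraph. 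A rooted binary tree is quasi-isometric to the $3$-regular tree, so Proposition~\ref{prop:tree-containment} applies and shows $\Gamma$ does not have polynomial containment.

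The remaining subtlety is that Theorem~\ref{thm:semigroup} as stated does not immediately guarantee that the two semigroup generators are generators of $\Gamma$ itself, nor that the embedding of words as vertices is injective as a map into the Cayley graph with respect to the combinatorial metric; but injectivity of the semigroup embedding is exactly the freeness hypothesis, and distinct semigroup elements are distinct group elements, so the binary tree genuinely embeds. I expect this verification — that the abstract free semigroup yields an honest tree subgraph of the Cayley graph that is quasi-isometric to the $3$-regular tree — to be the only place requiring care, and it is essentially the same bounded-geometry argument already used in the proof of Corollary~\ref{cor:amenable}.

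Once the second alternative is excluded, $\Gamma$ is virtually nilpotent. By Gromov's theorem (or simply the fact that finitely generated virtually nilpotent groups have polynomial growth), $\Gamma$ then has polynomial growth of some degree $d$, which completes the statement. It is worth noting that Corollary~\ref{cor:amenable} is not strictly needed for this particular implication, since elementary amenability already hands us the relevant dichotomy; its role in the paper is to handle the non-amenable case of Question~\ref{question2} separately. Thus the proof of this corollary is self-contained given Theorem~\ref{thm:semigroup} and Proposition~\ref{prop:tree-containment}.
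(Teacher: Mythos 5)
Your proposal is correct and follows essentially the same route as the paper's proof: Chou's dichotomy (Theorem~\ref{thm:semigroup}), the free semigroup spanning a rooted binary tree in the Cayley graph that is quasi-isometric to the $3$-regular tree, Proposition~\ref{prop:tree-containment} to rule out polynomial containment, and Wolf's theorem (not Gromov's, as you note in your self-correction) for the final growth claim. The subtlety you flag---that $a$ and $b$ need not generate $\Gamma$---is resolved in the paper exactly as you implicitly do: one takes a finite generating set of $\Gamma$ containing $a$ and $b$, which is legitimate because polynomial containment is independent of the choice of finite generating set by Theorem~\ref{thm:qi-containment}.
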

\begin{proof}
Let $\Gamma$  be a finitely generated elementary amenable group, and suppose that it is not virtually nilpotent. 
By Theorem~\ref{thm:semigroup}, there are elements $a,b \in \Gamma$ generating a free semigroup $S$ in two generators.
Consider the Cayley graph $G$ of $\Gamma$ with respect to a finite generating set containing $a$ and $b$.  Then the elements of $S$ 
span a subgraph $T$ of $G$ which is an infinite rooted tree, the root is the identity element of $G$, and any other vertex has degree $3$.
Observe that $T$ is quasi-isometric to the $3$-regular tree, and hence Proposition~\ref{prop:tree-containment} implies that $G$  does not have polynomial containment.  

The result follows by invoking Wolf's result that finitely generated virtually nilpotent groups have polynomial growth~\cite{Wolf68}.
\end{proof}

From Corollaries~\ref{cor:amenable} and~\ref{cor:elementary}, it follows that  polynomial growth is equivalent to polynomial containment in the class of finitely generated groups that are elementary amenable or non-amenable.  The class of non-elementary amenable groups has been a subject of intense study since it was shown in the 1980's that the class is non-empty with the appearance of Grigorchuk's group of intermediate growth~\cite{Gr83}.   The class of non-elementary amenable groups contains groups of intermediate growth as wells as groups of exponential growth. An example of a non-elementary amenable group of exponential growth is the Basilica group;  the group was introduced by Grigorchuk and Zuk~\cite{GZ02} where they proved that it has exponential growth and it does not belong to the class of elementary amenable groups; the proof that the group is amenable was found by Bartholdi and Virag~\cite{BV05}.  It is conceivable that Proposition~\ref{prop:tree-containment} can be used to show that groups of exponential growth have no polynomial containment, in other words, that the following folk question has a positive answer. A small variation of this question was asked by A.Thom~\cite{Thom13}. 

\begin{question}
Let $G$ be a finitely generated non-elementary amenable group of exponential growth, and let $S$ be a finite generating set. Does the Cayley graph of $G$ with respect to $S$ contains a a tree with positive Cheeger constant?
\end{question}

Since the Cayley graph of a group of intermediate growth cannot contain a tree with positive Cheeger constant,  answering Question~\ref{question2} is a more subtle question.

\medskip

\noindent \textbf{Acknowledgements.} The author acknowledges funding by the Natural Sciences and Engineering Research Council of Canada (NSERC), and thanks  Gidi Amir, Florian Lehner,  Pierre Will, and the  referee  for comments and proofreading.


\begin{thebibliography}{10}

\bibitem{BV05}
Laurent Bartholdi and B{\'a}lint Vir{\'a}g.
\newblock Amenability via random walks.
\newblock {\em Duke Math. J.}, 130(1):39--56, 2005.

\bibitem{BS97}
I.~Benjamini and O.~Schramm.
\newblock Every graph with a positive {C}heeger constant contains a tree with a
  positive {C}heeger constant.
\newblock {\em Geom. Funct. Anal.}, 7(3):403--419, 1997.

\bibitem{BrHa99}
Martin~R. Bridson and Andr{\'e} Haefliger.
\newblock {\em Metric spaces of non-positive curvature}, volume 319 of {\em
  Grundlehren der Mathematischen Wissenschaften [Fundamental Principles of
  Mathematical Sciences]}.
\newblock Springer-Verlag, Berlin, 1999.

\bibitem{Ch80}
Ching Chou.
\newblock Elementary amenable groups.
\newblock {\em Illinois J. Math.}, 24(3):396--407, 1980.

\bibitem{DMPT15}
Danny Dyer, Eduardo Mart\'inez-Pedroza, and Brandon Thorne.
\newblock The coarse geometry of hartnell’s firefighter problem on infinite
  graphs.
\newblock {\em Discrete Mathematics (To appear)}, 2017.
\newblock https://arxiv.org/abs/1507.03050.

\bibitem{FHS00}
S.~Finbow, B.~Hartnell, Q.~Li, and K.~Schmeisser.
\newblock On minimizing the effects of fire or a virus on a network.
\newblock {\em J. Combin. Math. Combin. Comput.}, 33:311--322, 2000.
\newblock Papers in honour of Ernest J. Cockayne.

\bibitem{Gr83}
R.~I. Grigorchuk.
\newblock On the {M}ilnor problem of group growth.
\newblock {\em Dokl. Akad. Nauk SSSR}, 271(1):30--33, 1983.

\bibitem{GZ02}
Rostislav~I. Grigorchuk and Andrzej {\.Z}uk.
\newblock On a torsion-free weakly branch group defined by a three state
  automaton.
\newblock {\em Internat. J. Algebra Comput.}, 12(1-2):223--246, 2002.
\newblock International Conference on Geometric and Combinatorial Methods in
  Group Theory and Semigroup Theory (Lincoln, NE, 2000).

\bibitem{Thom13}
Andreas~Thom (http://mathoverflow.net/users/8176/andreas thom).
\newblock Trees in groups of exponential growth.
\newblock MathOverflow.
\newblock URL:http://mathoverflow.net/q/60011 (version: 2013-05-09).

\bibitem{Wolf68}
Joseph~A. Wolf.
\newblock Growth of finitely generated solvable groups and curvature of
  {R}iemanniann manifolds.
\newblock {\em J. Differential Geometry}, 2:421--446, 1968.

\end{thebibliography}
\end{document}